\def\ps@pprintTitle{%
 \let\@oddhead\@empty
 \let\@evenhead\@empty
 \def\@oddfoot{\hfill\today}%
 \let\@evenfoot\@oddfoot}
\algnewcommand\algorithmicinput{\textbf{Input:}}
\algnewcommand\Input{\item[\algorithmicinput]}
\algnewcommand\algorithmicoutput{\textbf{Output:}}
\algnewcommand\Output{\item[\algorithmicoutput]}
\newtheorem{thm}{Theorem}
\newtheorem{lem}[thm]{Lemma}
\newtheorem{prop}[thm]{Proposition}
\theoremstyle{definition}
\newtheorem{defn}{Definition}
\newtheorem{expl}{Example}
\newtheorem{rem}{Remark}
\newcommand{\K}{\mathbb{K}}
\newcommand{\R}{\mathbb{R}}
\newcommand{\ann}{\textrm{Ann}}
\newcommand{\fl}{\textrm{FL}}
\newcommand{\mvh}{\textsc{Matrix-vector-horner}}
\newcommand{\vmh}{\textsc{Vector-matrix-horner}}
\begin{document}

\begin{frontmatter}
  \title{Fast Algorithm for Calculating the Minimal Annihilating
    Polynomials of Matrices via Pseudo Annihilating Polynomials}
  \author[tsukuba]{Shinichi Tajima}
  \ead{tajima@math.tsukuba.ac.jp}

  \author[kanazawa]{Katsuyoshi Ohara}
  \ead{ohara@se.kanazawa-u.ac.jp}
  \ead[url]{http://air.s.kanazawa-u.ac.jp/~ohara/}

  \author[tsukuba]{Akira Terui\corref{corauthor}}
  \cortext[corauthor]{Corresponding author}
  \ead{terui@math.tsukuba.ac.jp}
  \ead[url]{http://researchmap.jp/aterui}

  \address[tsukuba]{Faculty of Pure and Applied Sciences, University
    of Tsukuba, Tsukuba 305-8571, Japan}
  \address[kanazawa]{Faculty of Mathematics and Physics, Kanazawa
    University, Kanazawa 920-1192, Japan}

  \begin{abstract}
    Minimal annihilating polynomials are very useful in a wide variety
    of algorithms in exact linear algebra.  A new efficient method is
    proposed for calculating the minimal annihilating polynomials for
    all the unit vectors, for a square matrix over a field of
    characteristic zero. Key ideas of the proposed method are the
    concept of pseudo annihilating polynomial and the use of binary
    splitting technique. Efficiency of the resulting algorithms is
    shown by arithmetic time complexity analysis.
  \end{abstract}

  \begin{keyword}
    The minimal polynomial \sep The minimal annihilating polynomial
    \sep Exact calculation

    \MSC[2010] 15A18 \sep 65F15 \sep 68W30
  \end{keyword}
\end{frontmatter}

\newpage

\section{Introduction}
\label{sec:intro}

Exact linear algebra calculations are of importance in various fields
in mathematics, and a variety of softwares has been developed 
(\cite{M4RI}, \cite{linbox2002}, \cite{dum-gio-per2008},
\cite{har2010}, \cite{linbox152}, \cite{par2017}, \cite{ntl11},
\cite{iml105}).

We have proposed, in the context of exact linear algebra, a series of
algorithms on eigenproblems such as calculating
eigenvectors (\cite{taj-oha-ter2015}, \cite{taj-ter2013},
\cite{taj-ter2014}, \cite{taj-ter2015}, \cite{ter-taj2011}),
generalized eigenvectors and/or eigen space
decomposition (\cite{oha-taj2009b}, \cite{oha-taj2012},
\cite{oha-taj2013}, \cite{oha-taj2014}, \cite{taj2013},
\cite{taj-hig2009}, \cite{taj-iiz2009}), matrix inverse
(\cite{taj-oha-ter2016b}, \cite{taj-oha-ter2016}), spectral
decomposition (\cite{taj-iiz2012}, \cite{oha-taj2009b},
\cite{oha-taj2009}, \cite{oha-taj2012b}, \cite{oha-taj2012},
\cite{taj2012}), and so on. We have shown in these papers that the use
of the minimal annihilating polynomials of all unit vectors for a matrix
provides effective methods for these eigenproblems. We have shown in
particular that the cost of computation can be reduced significantly
by using the minimal annihilating polynomials of a matrix for all unit
vectors, in place of the minimal polynomial. It
is therefore desirable, in the context of symbolic computation, to
construct efficient methods for calculating all the minimal
annihilating polynomials for designing efficient algorithms on
eigenproblems.



We propose in this paper an efficient method for calculating all the
unit minimal annihilating polynomials for a matrix.
Keys of our approach are the notion of pseudo annihilating introduced
in Section~\ref{sec:pmap} and the use of binary splitting technique. More
precisely, in the proposed method, we first calculate, almost
deterministic way, pseudo annihilating polynomials as candidates of
unit minimal annihilating polynomials, nextly, we certify that these are
true unit minimal annihilating polynomials. Note that since the method
for calculating pseudo annihilating polynomials can be regarded as a
kind of extension or variant of Wiedemann's method (\cite{wie1986})
for computing
characteristic polynomials, the possibility that the pseudo
annihilating polynomials are true unit minimal annihilating polynomials is
expected to be high. This is one of advantage of our method. Note also
that, for the case where a pseudo annihilating polynomial does not
pass verification, one can efficiently revise it to obtain true one,
because each pseudo annihilating polynomial enjoys the property that
it divides the corresponding minimal annihilating polynomial.  Another
advantage of the 
proposed method lies in the fact that certain processes in the main
blocks of the algorithm can be executed in parallel.  Therefore, the
resulting algorithm fits computing environments of multiple processors
and/or cores to gain its efficiency.

The rest of the paper is organized as follows.  In
Section~\ref{sec:tmap}, we briefly recall the notion of minimal
annihilating polynomial and fix notation. We give a naive algorithm
for calculating minimal annihilating polynomials.  In
Section~\ref{sec:pmap}, we introduce the notion of pseudo annihilating
polynomials and present an algorithm for calculating pseudo
annihilating polynomials of all unit vectors.  We also give an
algorithm for calculating, via pseudo annihilating polynomials, all the
unit minimal annihilating polynomials for a matrix. In
Section~\ref{sec:upap}, by adopting binary splitting technique, we describe an
efficient method for calculating pseudo unit annihilating
polynomials. We also show, by
time complexity analysis, that the use of binary splitting technique
improves the efficiency of calculation.


\section{The minimal annihilating polynomials}
\label{sec:tmap}

Let $A$ be a $n\times n$ matrix over a field $\K$ of characteristic
zero and let $\chi_A(\lambda)$ be the characteristic polynomial of
$A$. For a non-zero column vector $\bm{v}$ over $\K$ of dimension $n$,
let $\ann_{\K[\lambda]}(A,\bm{v})\subset \K[\lambda]$ denote the ideal
defined to be
\begin{equation}
  \label{eq:annideal}
  \ann_{\K[\lambda]}(A,\bm{v}) = \{p(\lambda)\in \K[\lambda]\mid
  p(A)\bm{v}=\bm{0}\},
\end{equation}
where $\K[\lambda]$ is the ring of univariate polynomial in $\lambda$
over $\K$.  The monic generator, denoted by $\pi_{A,\bm{v}}(\lambda)$,
of the ideal $\ann_{\K[\lambda]}(A,\bm{v})$ is called \emph{the
  minimal annihilating polynomial} of $\bm{v}$ for the matrix $A$.
Furthermore, let $\pi_{A,j}(\lambda)$ denote, for $j=1,2,\ldots,n$,
the $j$-th \emph{unit minimal annihilating polynomial}
$\pi_{A,\bm{e}_j}(\lambda)$ of the $j$-th unit vector
$\bm{e}_j=(0,\ldots,0,1,0,\ldots,0)$. 

Assume that the irreducible factorization
\begin{equation}
  \label{eq:charpol}
  \chi_A(\lambda)=f_1(\lambda)^{m_1}f_2(\lambda)^{m_2}\cdots
  f_q(\lambda)^{m_q}
\end{equation}
over $\K$ of the characteristic polynomial $\chi_A(\lambda)$ is known.


Let $d_p=\deg(f_p)$ and set
\[
G_p = g_p(A), \quad F_p = f_p(A), \qquad\qquad p=1,2,\ldots,q,
\]
where
\begin{equation}
  \label{eq:gp}
  \begin{split}
    g_p(\lambda) &= f_1(\lambda)^{m_1}f_2(\lambda)^{m_2}\cdots
    f_{p-1}(\lambda)^{m_{p-1}}f_{p+1}(\lambda)^{m_{p+1}}\cdots
    f_q(\lambda)^{m_q}
    \\
    &= \chi_A(\lambda)/(f_p(\lambda)^{m_p}).
  \end{split}
\end{equation}

Now, let 
\begin{equation}
  \label{eq:jthunitminannihpol}
  \pi_{A,j}(\lambda)=f_1(\lambda)^{r_{j,1}}f_2(\lambda)^{r_{j,2}}\cdots
  f_q(\lambda)^{r_{j,q}}
\end{equation}
be the factorization of the $j$-th unit minimal annihilating
polynomial $\pi_{A,j}(\lambda)$, where $0\le r_{j,p}\leq m_p$ for
$1\leq p\leq q.$ 

Then, the exponent $r_{j,p}$ of the factor $f_p(\lambda)$ in
$\pi_{A,j}(\lambda)$ is identified as the minimum number $k$
satisfying $F_p^kG_p\bm{e}_j=\bm{0}$.  The simple observation above
leads a naive algorithm (Algorithm~\ref{alg:umap}) for calculating the
unit minimal annihilating polynomial(s).

Here we recall a vector version of the Horner's rule.

\begin{prop}
  \label{prop:matrix-vector-horner}
  Let $f(\lambda)\in\K[\lambda]$ be
  \begin{equation}
    \label{eq:f}
    f(\lambda)=a_d\lambda^d+a_{d-1}\lambda^{d-1}+\cdots+a_0\lambda^0,
  \end{equation}
  with $a_d\ne 0$, $A\in\K^{n\times n}$ and $\bm{v}\in\K^n$ be a
  column vector. Then, a vector $f(A)\bm{v}$ is calculated with
  $O(n^2d)$ arithmetic operations in $\K$.
\end{prop}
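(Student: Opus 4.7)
The plan is to apply Horner's rule in a form adapted to matrix-vector products, so that at no stage do we ever construct $f(A)$ as a matrix; the intermediate objects are always vectors in $\K^n$.

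First I would rewrite $f(\lambda)$ in nested form
\[
  f(\lambda) = \bigl(\cdots\bigl((a_d\lambda + a_{d-1})\lambda + a_{d-2}\bigr)\lambda + \cdots + a_1\bigr)\lambda + a_0,
\]
and then apply this formally with $\lambda \mapsto A$ acting on $\bm{v}$. Concretely, I would define a sequence of vectors by $\bm{w}_d = a_d\bm{v}$ and, for $k = d-1, d-2, \ldots, 0$,
\[
  \bm{w}_k = A\bm{w}_{k+1} + a_k \bm{v}.
\]
A straightforward induction on $k$ then shows that $\bm{w}_k = \sum_{i=0}^{d-k} a_{k+i}\,A^{i}\bm{v}$, so in particular $\bm{w}_0 = f(A)\bm{v}$, which is the desired output.

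For the complexity count, each update step consists of one matrix-vector product $A\bm{w}_{k+1}$, which costs $O(n^2)$ arithmetic operations in $\K$, together with one scalar-vector multiplication $a_k\bm{v}$ and one vector addition, each costing $O(n)$. Since there are exactly $d$ such update steps (from $k=d-1$ down to $k=0$), the total cost is $d\cdot (O(n^2)+O(n)) = O(n^2 d)$, as claimed.

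There is no real obstacle here; the only thing to be careful about is to avoid the temptation of forming the matrix $f(A)$ via a matrix-level Horner iteration, which would cost $O(n^3 d)$. The key point of the statement is precisely that keeping the iteration at the vector level, and applying $A$ to a vector rather than to a matrix at each step, saves a factor of $n$.
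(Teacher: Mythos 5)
Your proposal is correct and follows essentially the same route as the paper: both apply Horner's rule at the vector level, iterating $\bm{w}_k = A\bm{w}_{k+1} + a_k\bm{v}$ so that each of the $d$ steps costs one $O(n^2)$ matrix-vector product plus $O(n)$ vector operations, for $O(n^2 d)$ in total. Your explicit induction showing $\bm{w}_0 = f(A)\bm{v}$ and the remark about avoiding the $O(n^3 d)$ matrix-level iteration are slightly more detailed than the paper's one-line justification, but the argument is the same.
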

\begin{proof}
  $f(A)\bm{v}$ is calculated with the Horner's rule incorporating with 
  multiplication of $\bm{v}$ from the right as
  \begin{equation}
    \label{eq:fav}
    \begin{split}
      f(A)\bm{v} &= (a_dA^d+a_{d-1}A^{d-1}+\cdots+a_0E)\bm{v} \\
      &= A(\cdots A(A(a_d(A\bm{v}) +
      a_{d-1}\bm{v})+a_{d-2}\bm{v})\cdots)+a_0\bm{v},
    \end{split}
  \end{equation}
  with repeating pairs of a matrix-vector multiplication and a vector
  addition, whose complexity is $O(n^2)$ and $O(n)$, respectively, for
  $O(d)$ times. The cost is bounded by $O(n^2d)$ in total.
\end{proof}

For a row vector $\bm{w}\in\K^n$, $\bm{w}f(A)$ is calculated in the
same manner as in Proposition~\ref{prop:matrix-vector-horner}. We summarize
Proposition~\ref{prop:matrix-vector-horner}
as in Algorithms~\ref{alg:matrix-vector-horner} and
\ref{alg:vector-matrix-horner}
for use in other
algorithms in this paper.
(Please see \cite{taj-oha-ter2014} for a matrix version of the
Horner's rule.)

\begin{algorithm}[h]
  \caption{Calculating the $j$-th unit minimal annihilating
    polynomial $\pi_{A,j}(\lambda)$}  
  \label{alg:umap}
  \begin{algorithmic}[1]
    \Input{
      $A\in \K^{n\times n}$; \Comment{Input matrix;}

      $\chi_A(\lambda)=f_1(\lambda)^{m_1}f_2(\lambda)^{m_2}\cdots
      f_q(\lambda)^{m_q}\in \K[\lambda]$; \Comment{Irreducible
        factorization of the characteristic polynomial of $A$ as in
        \cref{eq:charpol};} 
    }
    \Output{\{$r_{j,1},\ldots,r_{j,q}$\};}\Comment{The list of
      exponents of the factors in $\pi_{A,j}(\lambda)$ as in
      \cref{eq:jthunitminannihpol}} 
    \For{$i=1,\ldots,q$}\label{alg:umap:for}
    \State{$g_i(\lambda)\gets \chi_A(\lambda)/(f_i(\lambda)^{m_i})$;}\label{alg:umap:gilambda}
    \State{$\bm{b}_{i,j}\gets
      \mvh(g_i(\lambda),A,\bm{e}_j$);}\label{alg:umap:bij} \Comment{$\bm{b}_{i,j}\gets g_i(A)\bm{e}_j$}
    \State{$k\gets 0$;}
    \While{$\bm{b}_{i,j}\ne\bm{0}$}
    \State{$\bm{b}_{i,j}\gets\mvh(f_i(\lambda),A,\bm{b}_{i,j})$;}
    \label{alg:umap:matrixvector} \Comment{$\bm{b}_{i,j}\gets f_i(A)\bm{b}_{i,j}$}
    \State{$k\gets k+1$;}
      \EndWhile
      \State{$r_{j,i}\gets k$;}
    \EndFor
    \State \textbf{return} \{$r_{j,1},\ldots,r_{j,q}$\}.
  \end{algorithmic}
\end{algorithm}

\newpage

\begin{rem}
  Assume that the minimal polynomial $\pi_A(\lambda)$ of $A$ together 
  with its irreducible factorization 
  \begin{equation}
    \label{eq:minpol}
    \pi_A(\lambda)=f_1(\lambda)^{l_1}f_2(\lambda)^{l_2}\cdots
    f_q(\lambda)^{l_q}
  \end{equation}
  is known.  Then, $m_p$ 
  can be replaced with $l_p$.  (Efficient algorithms for calculating
  the minimal polynomial (e.g.\ \cite{sto1998}) have been proposed.)
\end{rem}

In this paper, time complexity of algorithms is estimated with
arithmetic operations in $\K$, assuming that the
irreducible factorization of $\chi_A(\lambda)$ is given unless
otherwise stated.



\begin{algorithm}[p]
  \caption{The Horner's rule for matrix polynomial multiplied by a
    column vector from the right side}
  \label{alg:matrix-vector-horner}
  \begin{algorithmic}[1]
    \Input{
      $f(\lambda)=a_d\lambda^d+a_{d-1}\lambda^{d-1}+\cdots+a_0\lambda^0\in\K[\lambda]$; 

      $A\in \K^{n\times n}$;

      $\bm{v}\in\K^n$; \Comment{A column vector}

    }
    \Output{$f(A)\bm{v}$;}
    \Function{Matrix\_vector\_horner}{$f(\lambda)$, $A$, $\bm{v}$}
    \State \textbf{return} $f(A)\bm{v}$ calculated as in \cref{eq:fav}.
    \EndFunction
  \end{algorithmic}
\end{algorithm}

\begin{algorithm}
  \caption{The Horner's rule for matrix polynomial multiplied by a
    row vector from the left side}
  \label{alg:vector-matrix-horner}
  \begin{algorithmic}[1]
    \Input{
      $f(\lambda)=a_d\lambda^m+a_{d-1}\lambda^{m-1}+\cdots+a_0\lambda^0\in\K[\lambda]$;

      $A\in \K^{n\times n}$;

      $\bm{w}\in\K^n$; \Comment{A row vector}

    }
    \Output{$\bm{w}f(A)$;}
    \Function{Vector\_matrix\_horner}{$f(\lambda)$, $A$, $\bm{w}$}
    \State \textbf{return} $\bm{w}f(A)$ calculated in the
    same manner as in Proposition~\ref{prop:matrix-vector-horner}.
    \EndFunction
  \end{algorithmic}
\end{algorithm}


\begin{prop}
  \label{prop:umap:time}
  For given matrix $A\in \K^{n\times n}$ and irreducible factorization
  of its characteristic polynomial $\chi_A(\lambda)$,
  Algorithm~\ref{alg:umap} calculates the $j$-th minimal annihilating
  polynomial $\pi_{A,j}(\lambda)$ with
  \begin{equation}
    \label{eq:prop:umap:total}
    O\left(
      (q-1)n^3+n^2\deg(\pi_{A,j}(\lambda))
    \right)
  \end{equation}
  arithmetic operations in $\K$.
\end{prop}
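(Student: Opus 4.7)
The plan is to estimate the cost of each step of Algorithm~\ref{alg:umap} via Proposition~\ref{prop:matrix-vector-horner} and then sum across $i = 1, \ldots, q$. Correctness is immediate from the characterization, stated just above the algorithm, of $r_{j,p}$ as the smallest $k$ with $F_p^k G_p \bm{e}_j = \bm{0}$: since $g_i(\lambda)$ is coprime to $f_i(\lambda)$, after $k$ passes through the while body one has $\bm{b}_{i,j} = f_i(A)^k g_i(A)\bm{e}_j$, which first vanishes precisely at $k = r_{j,i}$; hence the body is executed exactly $r_{j,i}$ times.

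Within the $i$-th outer iteration, the two significant costs are line~\ref{alg:umap:bij}, the single call $\mvh(g_i(\lambda), A, \bm{e}_j)$, and the while loop, which issues $r_{j,i}$ further calls of the form $\mvh(f_i(\lambda), A, \cdot)$. By Proposition~\ref{prop:matrix-vector-horner}, the former costs $O(n^2 \deg g_i) = O(n^2(n - m_i d_i))$ and each iteration of the latter costs $O(n^2 d_i)$, so the loop contributes $O(r_{j,i}\, n^2 d_i)$. Line~\ref{alg:umap:gilambda} is a polynomial division of cost at most $O(n^2)$, and each zero test on $\bm{b}_{i,j}$ is $O(n)$; both are dominated by the Horner costs.

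Summing the initialization costs and using $\sum_{i=1}^q m_i d_i = \deg\chi_A = n$ gives
\[
\sum_{i=1}^q O\!\left(n^2(n - m_i d_i)\right) \;=\; O\!\left(n^2(qn - n)\right) \;=\; O((q-1)n^3),
\]
while summing the while-loop costs and invoking the factorization~\cref{eq:jthunitminannihpol} yields
\[
\sum_{i=1}^q O(r_{j,i}\, n^2 d_i) \;=\; O\!\left(n^2 \sum_{i=1}^q r_{j,i} d_i\right) \;=\; O\!\left(n^2 \deg \pi_{A,j}(\lambda)\right).
\]
Adding these two contributions produces exactly the bound~\cref{eq:prop:umap:total}.

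The only point requiring care is matching the while-loop iteration count to $r_{j,i}$ rather than $r_{j,i}+1$, since a loose count would inject an additional $O(n^2 \sum_i d_i) = O(n^3)$ term that, although absorbed into $(q-1)n^3$ when $q\ge 2$, would blur the $q=1$ case where the second summand in~\cref{eq:prop:umap:total} is the sharp contribution. Apart from this small bookkeeping, the proof is a direct application of Proposition~\ref{prop:matrix-vector-horner} combined with the identity $\sum_i m_i d_i = n$.
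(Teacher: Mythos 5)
Your proof is correct and follows essentially the same route as the paper's: bound line~\ref{alg:umap:bij} by $O(n^2(n-d_im_i))$ via Proposition~\ref{prop:matrix-vector-horner}, sum over $i$ to get $O((q-1)n^3)$, and bound the while loop by $O(r_{j,i}n^2d_i)$ per factor, summing to $O(n^2\deg\pi_{A,j}(\lambda))$. The extra bookkeeping you supply (the identity $\sum_i m_id_i=n$ and the exact iteration count $r_{j,i}$) is left implicit in the paper but is consistent with it.
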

\begin{proof}
  First, since $\deg(g_i(A))=n-d_im_i$, calculation of
  $\bm{b}_{i,j}=g_i(A)\bm{e}_j$ in line \ref{alg:umap:bij} takes
  $O(n^2(n-d_im_i))$ operations by
  Proposition~\ref{prop:matrix-vector-horner}.
  Repeating this calculation for
  $i=1,\ldots,q$ 
  takes $O((q-1)n^3)$ operations.
  Next, calculating $f_i(A)^{r_{j,i}}\bm{b}_{i,j}$ takes
  $O(r_{j,i}n^2d_i)$ operations by
  Proposition~\ref{prop:matrix-vector-horner}.  Repeating this
  calculation for $i=1,\ldots,q$ takes
  $O(n^2\deg(\pi_{A,j}(\lambda)))$ operations. Thus, the total number
  of operations becomes as in \cref{eq:prop:umap:total}, which proves
  the proposition.
\end{proof}


If we calculate the minimal annihilating polynomial
$\pi_{A,j}(\lambda)$ of the unit vector $\bm{e}_j$ for all
$j=1,\ldots,n$ by using the naive Algorithm~\ref{alg:umap}, the cost
of calculation is equal to
\begin{equation}
  \label{eq:prop:umap:total-all}
  O\left(
    (q-1)n^4+n^2\sum_{j=1}^n\deg(\pi_{A,j}(\lambda))
  \right).
\end{equation}
The first term give notice that calculation of $\bm{b}_{i,j}$'s is
time-consuming.
To overcome this issue, we introduce
\emph{pseudo} annihilating polynomials in the next section.

\section{Pseudo annihilating polynomials for calculating minimal
  annihilating polynomials}
\label{sec:pmap}

In this section, we introduce the notion of pseudo annihilating
polynomials of unit vectors in a constructive way and give an
algorithm for calculating minimal annihilating polynomials of all unit
vectors. 



Let $\bm{u}$ be a non-zero row vector over $\K$. Let
\begin{equation}
  \label{eq:wpk}
  \begin{split}
    \bm{w}_p^{(0)} &=
    (w_{p,1}^{(0)},w_{p,2}^{(0)},\ldots,w_{p,n}^{(0)})=\bm{u}G_p,  
    \\
    \bm{w}_p^{(k)} &=
    (w_{p,1}^{(k)},w_{p,2}^{(k)},\ldots,w_{p,n}^{(k)})=\bm{u}G_p{F_p}^k
    \quad
    \textrm{for $k>0$,}
  \end{split}
\end{equation}
where $G_p$ and $F_p$ are defined as in \cref{eq:gp}.

Furthermore, for $j=1,\ldots,n$, define
\begin{equation}
  \label{eq:rhopj}
  \rho_{p,j}=
  \begin{cases}
    0 & \text{if $w_{p,j}^{(0)}=0$,}  \\
    k & \text{if $w_{p,j}^{(k-1)}\ne 0$ and $w_{p,j}^{(k)}=0$.}
  \end{cases}
\end{equation}

Now we consider the polynomial $\pi'_{A,j,\bm{u}}(\lambda)$ defined by
\[
\pi'_{A,j,\bm{u}}(\lambda)=f_1^{\rho_{1,j}}(\lambda)f_2^{\rho_{2,j}}(\lambda)\cdots
f_q^{\rho_{q,j}}(\lambda).
\]

Then, we have the following lemma.
\begin{lem}
  \label{lem:rjp}
  The polynomial $\pi'_{A,j,\bm{u}}(\lambda)$ divides $\pi_{A,j}(\lambda)$.
  %
\end{lem}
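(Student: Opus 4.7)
The plan is to prove the divisibility factor-by-factor. Since $f_1,\ldots,f_q$ are pairwise coprime irreducible polynomials, it suffices to show that
\[
\rho_{p,j} \le r_{j,p} \quad \text{for every } p = 1,\ldots,q,
\]
where $r_{j,p}$ is the exponent of $f_p$ in $\pi_{A,j}(\lambda)$ as in \cref{eq:jthunitminannihpol}.

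Recall from the paragraph preceding Algorithm~\ref{alg:umap} that $r_{j,p}$ is characterized as the smallest nonnegative integer $k$ for which $F_p^k G_p \bm{e}_j = \bm{0}$. Since $F_p$ and $G_p$ are both polynomials in $A$, they commute, so for any $k \ge r_{j,p}$ we may write
\[
\bm{u}\,G_p F_p^k\, \bm{e}_j \;=\; \bm{u}\, F_p^{\,k-r_{j,p}}\, F_p^{\,r_{j,p}} G_p\, \bm{e}_j \;=\; 0.
\]
In particular, the $j$-th entry of $\bm{w}_p^{(r_{j,p})} = \bm{u} G_p F_p^{\,r_{j,p}}$ vanishes, i.e.\ $w_{p,j}^{(r_{j,p})} = 0$.

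Now I consider the two cases in the definition \cref{eq:rhopj}. If $w_{p,j}^{(0)} = 0$, then $\rho_{p,j} = 0 \le r_{j,p}$ trivially. Otherwise $\rho_{p,j}$ is the least positive integer $k$ with $w_{p,j}^{(k)} = 0$, and the displayed calculation shows $k = r_{j,p}$ is one such integer; hence $\rho_{p,j} \le r_{j,p}$. Combining over all $p$ gives that $f_p^{\rho_{p,j}}$ divides $f_p^{r_{j,p}}$ for every $p$, and by coprimality of the $f_p$'s this yields $\pi'_{A,j,\bm{u}}(\lambda) \mid \pi_{A,j}(\lambda)$.

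There is no real obstacle here: the argument rests entirely on the commutativity of polynomials in $A$ together with the minimality characterization of $r_{j,p}$. The only thing worth being careful about is the bookkeeping between row and column actions (the test vector $\bm{u}$ acts on the left, while $\bm{e}_j$ is picked out on the right), which is exactly why pairing $\bm{u} G_p F_p^k$ with $\bm{e}_j$ on the right reduces to scalar equations of the form $\bm{u}(G_p F_p^k \bm{e}_j)$ and lets the vanishing of $F_p^{r_{j,p}} G_p \bm{e}_j$ be transferred to the vanishing of $w_{p,j}^{(r_{j,p})}$.
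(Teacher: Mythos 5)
Your proof is correct and takes essentially the same route as the paper's: both rest on the identity $\bm{u}G_pF_p^{k}\bm{e}_j=w_{p,j}^{(k)}$, which transfers the vanishing of $F_p^{r_{j,p}}G_p\bm{e}_j$ to the scalar $w_{p,j}^{(r_{j,p})}$ and gives $\rho_{p,j}\le r_{j,p}$ for each $p$. The paper compresses this into one line; you have simply made explicit the commutativity of $F_p$ and $G_p$ and the factor-by-factor reduction to divisibility.
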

\begin{proof}
  Since
  $\bm{u}G_pF_p^{k}\bm{e}_j=\bm{w}_p^{(k)}\bm{e}_j=w_{p,j}^{(k)}$,
  $r_{j,p}\ge \rho_{p,j}$ holds. This completes the proof.
\end{proof}

We call the polynomial $\pi'_{A,j,\bm{u}}(\lambda)$, pseudo
annihilating polynomial of the unit vector $\bm{e}_j$ for the matrix
$A$ with respect to the row vector $\bm{u}$, or the $j$-th unit pseudo
annihilating polynomial.

Note that if the vector $\bm{u}$ above is sufficiently random, we have 
$\pi'_{A,j,\bm{u}}(\lambda)=\pi_{A,j}(\lambda)$.

We summarize the steps for calculating the unit pseudo annihilating
polynomials in Algorithm~\ref{alg:upap}. Let us write hereafter the output of
Algorithm~\ref{alg:upap}, as $\pi'_{A,j}(\lambda)$ for short. Since
the row vector $\bm{u}$ of the pseudo annihilating 
polynomial $\pi'_{A,j,\bm{u}}(\lambda)$ of the unit vector $\bm{e}_j$
is a random vector generated in the algorithm, the existence of a row
vector, say $\bm{u}$, used to calculate
$\pi'_{A,j,\bm{u}}(\lambda)$ is known, whereas the vector $\bm{u}$ is
not known.  We call $\pi'_{A,j}(\lambda)$ a pseudo annihilating
polynomial of $\bm{e}_j$, or a $j$-th unit pseudo annihilating
polynomial. 
\begin{algorithm}
  \caption{Calculating unit pseudo annihilating polynomials
    $\pi'_{A,j}(\lambda)$}
  \label{alg:upap}
  \begin{algorithmic}[1]
    \Input{
      $A\in K^{n\times n}$; \Comment{Input matrix;}

      $\chi_A(\lambda)=f_1(\lambda)^{m_1}f_2(\lambda)^{m_2}\cdots
      f_q(\lambda)^{m_q}\in K[\lambda]$; \Comment{Irreducible
        factorization of the characteristic polynomial of $A$;}
    }
    \Output{$P=(\rho_{i,j})\in\R^{q\times n}$, where $\rho_{i,j}$ is
      equal to exponent of factor $f_i(\lambda)$ in a $j$-th unit
      pseudo annihilating polynomial $\pi'_{A,j}(\lambda)$;} 
    \Function{Unit\_pseudo\_annih}{$A$, $\{\{f_1(\lambda), m_1\},
      \{f_2(\lambda), m_2\}, \dots, \{f_q(\lambda), m_q\}\}$}
    \State{$\bm{u}\gets $ (a randomly generated row vector of dimension
        $n$);}
    \For{$i=1,\ldots,q$}\label{alg:upap:for-i}
      \State{$g_i(\lambda)\gets
        \chi_A(\lambda)/(f_i(\lambda)^{m_i})$;}
      \label{alg:upap:gilambda}
      \State{$\bar{\bm{b}}_i=(\bar{b}_i^{(1)},\ldots,\bar{b}_i^{(n)})
        \gets \vmh(g_i(\lambda),A,\bm{u})$;} \label{alg:upap:ugia}
      \Comment{$\bar{\bm{b}}_i\gets\bm{u}g_i(A)$}
      \State{$k\gets 0$;}
      \State{$\textbf{FL}=(\fl_1,\fl_2,\ldots,\fl_n)=(0,\ldots,0)$;}
      \Comment{A tuple of flags such that $\fl_j=1$ if exponent
        $\rho_{i,j}$ of $f_i(\lambda)$ in a $j$-th unit pseudo
        annihilating polynomial $\pi'_{A,j}(\lambda)$ has been
        discovered;} 
      \For{$l=1,\ldots,m_i$} \label{alg:upap:for-l}
        \If{$\bar{\bm{b}}_i=\bm{0}$ and $\textbf{FL}=(1,\dots,1)$}
          \State break;
        \EndIf
        \For{$j=1,\ldots,n$}
        \If{$\bar{b}_i^{(j)}=0$ and $\textrm{FL}_j=0$}
        \Comment{Exponent of $f_i(\lambda)$ in
          $\pi'_{A,j}(\lambda)$ found;} 
          \State{$\rho_{i,j}\gets k$;}
          \State{$\fl_j\gets 1$;}
        \EndIf
        \EndFor
        \State{$\bar{\bm{b}}_i\gets\vmh(f_i(\lambda),A,\bar{\bm{b}}_i)$;} \label{alg:upap:vectormatrix}
        \Comment{$\bar{\bm{b}}_i\gets\bar{\bm{b}}_i f_i(A)$}
        \State{$k\gets k+1$;}
      \EndFor
      \algstore{alg:upap}
    \end{algorithmic}
  \end{algorithm}
  \begin{algorithm}
    \ContinuedFloat
    \caption{Calculating unit pseudo annihilating polynomials
      $\pi'_{A,j}(\lambda)$ (Continued)}
    \begin{algorithmic}
      \algrestore{alg:upap}
      \For{$j=1,\ldots,n$} \label{alg:upap:for-j}
        \If{$\textrm{FL}_j = 0$} \Comment{For $f_i(\lambda)$ of whose
          exponent in $\pi'_{A,j}(\lambda)$ has not found, it
          should be $m_i$;}
          \State{$\rho_{i,j}\gets m_i$;}
        \EndIf
      \EndFor
    \EndFor
    \State \Return $P=(\rho_{i,j});$
    \Comment{$\pi'_{A,j}(\lambda)=
        f_1(\lambda)^{\rho_{1,j}}f_2(\lambda)^{\rho_{2,j}}\cdots  
        f_q(\lambda)^{\rho_{q,j}}$;}
      \EndFunction
  \end{algorithmic}
\end{algorithm}

\begin{prop}
  \label{prop:upap-time}
  Algorithm~\ref{alg:upap} outputs
  \emph{all} unit pseudo annihilating polynomials for the matrix $A$
  with 
  \begin{equation}
    \label{eq:upap-time}
    O\left(
      (q-1)n^3+n^2\deg(\pi_A(\lambda))
    \right)
  \end{equation}
  arithmetic operations in $\K$.
  \begin{proof}

    We estimate time complexity of the algorithm as follows.  First,
    the amount of operations required for calculating $\bar{\bm{b}}_i$
    in line
    \ref{alg:upap:ugia} is
    estimated $O(n^2(n-d_im_i))$.
    Repeating this calculation for
    $i=1,\ldots,q$
    requires
    \begin{equation}
      \label{eq:prop:upap:1stterm}
      O((q-1)n^3)
    \end{equation}
    arithmetic operations.

    Next, in line
    \ref{alg:upap:vectormatrix}, calculating $\bm{\bar{b}}_if_i(A)$
    requires $O(n^2d_i)$ operations.  For each $i$ in the ``for'' loop in
    line \ref{alg:upap:for-i}, the ``for'' loop in line
    \ref{alg:upap:for-l} repeats for
    $\max_{j\in\{1,\dots,n\}}\rho_{i,j}$. Since
    $\rho_{i,j}\le r_{j,i}$ and
    $\max_{j\in\{1,\dots,n\}}r_{j,i}=l_i$,
    the number of operation is bounded by
    \begin{equation}
      \label{eq:prop:upap:2ndterm}
      O
      \left(
        n^2\sum_{i=1}^q(l_id_i)
      \right)
      =O(n^2\deg(\pi_A(\lambda)))
    \end{equation}
    for $i=1,\ldots,q$ in the ``for'' loop in line
    \ref{alg:upap:for-i}.
    Sum of the amounts in
    \cref{eq:prop:upap:1stterm,eq:prop:upap:2ndterm} gives an estimate
    of the number of operations in the whole algorithm as in
    \cref{eq:upap-time}, which proves the proposition.
  \end{proof}
\end{prop}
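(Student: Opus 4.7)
The plan is to separate the cost into the two natural phases of the algorithm: the precomputation on line~\ref{alg:upap:ugia} that forms $\bar{\bm{b}}_i = \bm{u}\,g_i(A)$ for each $i$, and the iterated application of $f_i(A)$ inside the loop on line~\ref{alg:upap:for-l} that gradually reveals the exponents $\rho_{i,j}$. Correctness is essentially read off from the loop invariant that, at the start of pass $l$, the vector $\bar{\bm{b}}_i$ equals $\bm{u}\,g_i(A)\,f_i(A)^{l-1}$ and the counter $k$ equals $l-1$, so the first vanishing of the $j$-th entry records $\rho_{i,j}$ in the sense of~\eqref{eq:rhopj}; any $j$ that escapes detection is correctly assigned $\rho_{i,j}=m_i$ by the post-loop sweep.

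For the first phase, I would invoke Proposition~\ref{prop:matrix-vector-horner} on the row side. Since $\deg g_i = n - d_i m_i$, each call costs $O(n^2(n-d_i m_i))$, and the identity $\sum_{i=1}^q d_i m_i = n$ telescopes the sum over $i$ to the $O((q-1)n^3)$ term in~\eqref{eq:upap-time}.

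The second phase is the delicate part. A naive bound lets the inner loop run the full $m_i$ passes, which would give only $O(n^3)$ in aggregate, short of the claimed $O(n^2\deg\pi_A(\lambda))$. The key is to show that the break statement guarding line~\ref{alg:upap:for-l} fires after $O(l_i)$ passes rather than $O(m_i)$. For this I would combine two facts: Lemma~\ref{lem:rjp} gives $\rho_{i,j}\le r_{j,i}$, and the standard identity $\pi_A = \operatorname{lcm}_j \pi_{A,j}$ yields $\max_j r_{j,i} = l_i$; thus every flag has been raised by pass $l_i+1$. At that same pass the vector equals $\bm{u}\,g_i(A)\,f_i(A)^{l_i}$, which is $\bm{0}$ because $g_i(A)f_i(A)^{l_i}$ is a multiple of $\pi_A(A)=0$. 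So both clauses of the break condition are met within $l_i+O(1)$ passes, each pass costing $O(n^2 d_i)$ through the call to \vmh\ (the intermediate component scan contributes only $O(n)$). Summing over $i$ yields $O(n^2 \sum_i l_i d_i) = O(n^2 \deg\pi_A(\lambda))$.

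Adding the two bounds gives~\eqref{eq:upap-time} as claimed. The step I expect to have to write most carefully is the termination argument just sketched: one must verify simultaneously that the vector becomes zero by pass $l_i+1$ and that every flag has been set by then, so that the factor $l_i$ (rather than $m_i$) in the final complexity is actually realized; without this, the break clause is useless and the bound degrades to $O(qn^3)$.
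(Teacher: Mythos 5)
Your proposal is correct and follows essentially the same route as the paper's proof: the same two-phase decomposition into the $O((q-1)n^3)$ cost of the calls on line~\ref{alg:upap:ugia} and the $O(n^2\sum_i l_i d_i)=O(n^2\deg\pi_A(\lambda))$ cost of the inner loop, with the same key inequality $\rho_{i,j}\le r_{j,i}\le l_i$ from Lemma~\ref{lem:rjp}. The only difference is that you spell out why \emph{both} clauses of the break condition are met within $l_i+O(1)$ passes (in particular that $\bm{u}\,g_i(A)f_i(A)^{l_i}=\bm{0}$ since $g_i f_i^{l_i}$ is a multiple of $\pi_A$), a detail the paper leaves implicit; this is a welcome tightening rather than a different argument.
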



\begin{rem}
  \label{rem:prop:upap:minpoly}
  If we have the minimal polynomial together with its irreducible
  factorization as in \cref{eq:minpol}, the computing time is
  \[
  O\left(
    qn^2(\deg(\pi_A(\lambda)))
  \right).
  \]
\end{rem}

\begin{rem}
  \label{rem:prop:upap:parallel}
  In Algorithm~\ref{alg:upap}, each processes in the ``for'' loop in
  line~\ref{alg:upap:for-i} is independent each other. This step can
  be executed in parallel. For
  example, if we distribute each processes to $M$ processors
  ($M\le q$), the
  computing time in \cref{eq:upap-time} is
  $O\left(
    \frac{q-1}{M}n^3+\frac{n^2}{M}\deg(\pi_A(\lambda))
  \right)$.
\end{rem}

Algorithm~\ref{alg:upap} is used in the next
Algorithm~\ref{alg:umap-p} for calculating the unit annihilating
polynomials.
\begin{algorithm}
  \caption{Calculating the unit minimal annihilating polynomials
    $\pi_{A,j}(\lambda)$}
  \label{alg:umap-p}
  \begin{algorithmic}[1]
    \Input{
      $A\in K^{n\times n}$; \Comment{Input matrix;}

      $\chi_A(\lambda)=f_1(\lambda)^{m_1}f_2(\lambda)^{m_2}\cdots
      f_q(\lambda)^{m_q}\in K[\lambda]$; \Comment{Irreducible
        factorization of the characteristic polynomial of $A$;}
    }
    \Output{$R=(r_{i,j})\in\R^{q\times n}$, where $r_{i,j}$ is
      equal to exponent of factor $f_i(\lambda)$ in the $j$-th unit
      minimal annihilating polynomial $\pi_{A,j}(\lambda)$;} 

    \Function{Unit\_minimal\_Annih}{$A$, $\{\{f_1(\lambda), m_1\},
      \{f_2(\lambda), m_2\}, \dots, \{f_q(\lambda), m_q\}\}$}
    \State $P\gets$\Call{Unit\_pseudo\_annih}{$A$,
      $\{\{f_1(\lambda),m_1\},\{f_2(\lambda), m_2\}, \dots,
      \{f_q(\lambda), m_q\}\}$}; \label{alg:umap-p-get-upap}
    \Comment{$P=(\rho_{i,j})$;}
    \State{$R=(r_{i,j})\gets P$;}
    \For{$j=1,\ldots,n$} \label{alg:umap-p-for-j}
      \State{$\pi'_{A,j}(\lambda)\gets f_1(\lambda)^{\rho_{1,j}}
        \cdots f_q(\lambda)^{\rho_{q,j}}$;} \label{alg:umap-p-call-upap}
      \Comment{A $j$-th unit pseudo annihilating polynomial for the
        matrix $A$;} 
      \State{$\bm{v}\gets\mvh(\pi'_{A,j}(\lambda),A,\bm{e}_j)$;} \label{alg:test-puap}
      \Comment{$\bm{v}\gets\pi'_{A,j}(A)\bm{e}_j$;}
      \If{$\bm{v}=\bm{0}$} \label{alg:test-puap-true1}
        \State{continue;}
      \Else{}
      \State{$\bm{v}_0\gets\bm{v}$;}
      \For{$i=1,\ldots,q$} \label{alg:umap-p-i-loop}
        \For{$l=1,\ldots,m_i-\rho_{i,j}$} \label{alg:umap-p-l-loop}
          \State{$\bm{v}\gets\mvh(f_i(\lambda),A,\bm{v})$;} \label{alg:umap-p-get-v}
          \Comment{$\bm{v}\gets f_i(A)\bm{v}$}
          \If{$\bm{v}=\bm{0}$} \label{alg:test-puap-true2}
            \State{$r_{i,j}\gets \rho_{i,j}+l$;} \Comment{Exponent of
              $f_i$ in a $j$-th unit minimal annihilating polynomial
              is equal to $\rho_{i,j}+l$;}
            \label{alg:umap-p-set-rij-for-the-largest}
            \State{$\bm{v}_i\gets\bm{0}$;}
            \For{$k=i-1,\ldots,0$} \label{alg:umap-p-update-vk-begin}
              \State{$\bm{v}_k\gets
                \mvh(f_i(\lambda)^l,A,\bm{v}_k)$;} \label{alg:umap-p-update-vk}
              \Comment{$\bm{v}_k\gets f_i(A)^l\bm{v}_k$}
            \EndFor \label{alg:umap-p-update-vk-end}
        \algstore{alg:umap-p}
      \end{algorithmic}
    \end{algorithm}
    \begin{algorithm}
      \ContinuedFloat
      \caption{Calculating the unit minimal annihilating polynomials
        $\pi_{A,j}(\lambda)$ (Continued)}
      \begin{algorithmic}
        \algrestore{alg:umap-p}
            \State{break;}
          \EndIf
        \EndFor
        \If{$\bm{v}=\bm{0}$}
          \State{break;} \label{alg:umap-p-break-i-loop}
        \Else
          \State{$\bm{v}_i\gets\bm{v}$;}
        \EndIf
      \EndFor \label{alg:umap-p-i-end}
      \For{$k=i-1,\ldots,1$} \label{alg:umap-p-k}
        \State{$\bm{v}\gets\bm{v}_{k-1}$;} \label{alg:umap-p-k-vk-1}
        \For{$l=0,\dots,m_k-\rho_{k,j}-1$} \label{alg:umap-p-k-l}
          \If{$\bm{v}=\bm{0}$} \label{alg:umap-p-k-vk-1-v}
            \For{$s=k-2,\ldots,0$} \label{alg:umap-p-update-vk-1-begin}
              \State{$\bm{v}_s\gets
                \mvh(f_k(\lambda)^l,A,\bm{v}_s)$;} \label{alg:umap-p-update-vs}
              \Comment{$\bm{v}_s\gets f_k(A)^l\bm{v}_s$}
            \EndFor \label{alg:umap-p-update-vk-1-end}
            \State{break;}
          \EndIf
          \State{$\bm{v}\gets
            \mvh(f_k(\lambda),A,\bm{v})$;} \label{alg:umap-p-multiply-v-by-fk}
          \Comment{$\bm{v}\gets f_k(A)\bm{v}$}
          \State{$r_{k,j}\gets r_{k,j} +1$;}
        \EndFor
      \EndFor \label{alg:umap-p-k-end}
      \EndIf
    \EndFor
    \State \Return $R=(r_{i,j});$
    \Comment{$\pi_{A,j}(\lambda)=
      f_1(\lambda)^{r_{1,j}}f_2(\lambda)^{r_{2,j}}\cdots  
      f_q(\lambda)^{r_{q,j}}$;}
    \EndFunction
  \end{algorithmic}
\end{algorithm}
\newpage

\begin{rem}
  In Algorithm~\ref{alg:umap-p}, in the case $\bm{v}=\bm{0}$ in
  line~\ref{alg:test-puap}, then the pseudo annihilating polynomial
  calculated in line~\ref{alg:umap-p-call-upap} is true unit minimal
  annihilating polynomial, thus no more calculation is needed. On the
  other hand, in the case $\bm{v}\ne\bm{0}$,
  the pseudo
  annihilating polynomial is a factor of true unit minimal
  annihilating polynomial and $\bm{v}$ is a partial result, thus
  calculation of the unit minimal annihilating
  polynomial is accomplished by calculating the minimal annihilating
  polynomial of $\bm{v}$ as in the rest of the algorithm.
  In this way, true unit annihilating polynomial is derived from the
  pseudo unit annihilating polynomial without restarting whole
  calculation, that makes proposed algorithm very efficient.

\end{rem}

We show the validity and the time complexity of the algorithm by
the following propositions.

\begin{prop}
  \label{prop:alg:umap-p}
  Algorithm~\ref{alg:umap-p} outputs
  the exponents of factors in the unit annihilating polynomials of
  $A$.
\end{prop}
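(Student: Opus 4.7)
The plan is a case analysis on the test of the pseudo polynomial at line~\ref{alg:test-puap}, combined with loop-invariant arguments for the two nested loops. By Lemma~\ref{lem:rjp}, $\rho_{i,j}\leq r_{i,j}$ for every $i$, so $\pi'_{A,j}(\lambda)$ divides $\pi_{A,j}(\lambda)$ and the vector $\bm{v}=\pi'_{A,j}(A)\bm{e}_j$ computed in line~\ref{alg:test-puap} has minimal annihilating polynomial
\[
\pi_{A,\bm{v}}(\lambda)=\pi_{A,j}(\lambda)/\pi'_{A,j}(\lambda)=\prod_{i=1}^{q}f_i(\lambda)^{r_{i,j}-\rho_{i,j}}.
\]
If $\bm{v}=\bm{0}$, this quotient is $1$, so $r_{i,j}=\rho_{i,j}$ for every $i$, which is exactly what the early exit on line~\ref{alg:test-puap-true1} records.

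\textbf{Main loop.} Assuming $\bm{v}\neq\bm{0}$, I would show that the outer loop at lines~\ref{alg:umap-p-i-loop}--\ref{alg:umap-p-i-end} correctly determines the exponent of the ``last'' factor. First I would prove by induction on $i$ the invariant that, at the start of an unbroken iteration $i$, the working vector equals
\[
\bm{v}_{i-1}=f_{i-1}(A)^{m_{i-1}-\rho_{i-1,j}}\cdots f_1(A)^{m_1-\rho_{1,j}}\,\pi'_{A,j}(A)\bm{e}_j.
\]
Cayley--Hamilton together with $r_{i,j}\leq m_i$ forces some first index $i^{*}\leq q$ and minimal $\ell^{*}\in\{1,\ldots,m_{i^{*}}-\rho_{i^{*},j}\}$ with $f_{i^{*}}(A)^{\ell^{*}}\bm{v}_{i^{*}-1}=\bm{0}$. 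A unique-factorization argument applied to the resulting divisibility $\pi_{A,j}(\lambda)\mid f_{i^{*}}(\lambda)^{\ell^{*}}\prod_{i<i^{*}}f_i(\lambda)^{m_i-\rho_{i,j}}\,\pi'_{A,j}(\lambda)$ then yields $r_{i^{*},j}=\rho_{i^{*},j}+\ell^{*}$ and $r_{i,j}=\rho_{i,j}$ for every $i>i^{*}$, which justifies the assignment on line~\ref{alg:umap-p-set-rij-for-the-largest}.

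\textbf{Cleanup loop and main obstacle.} After the break, the update block at lines~\ref{alg:umap-p-update-vk-begin}--\ref{alg:umap-p-update-vk-end} multiplies each stored $\bm{v}_k$ with $k<i^{*}$ by $f_{i^{*}}(A)^{\ell^{*}}$; combining this with analogous updates at lines~\ref{alg:umap-p-update-vk-1-begin}--\ref{alg:umap-p-update-vk-1-end} performed in subsequent cleanup iterations, a backward induction on $k$ would show that when the cleanup reaches index $k$, the vector $\bm{v}_{k-1}$ has minimal annihilating polynomial equal to the pure prime power $f_k(\lambda)^{r_{k,j}-\rho_{k,j}}$. Consequently, the number of multiplications by $f_k(A)$ needed to annihilate it is exactly $r_{k,j}-\rho_{k,j}$, which is what is accumulated into $r_{k,j}$. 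The hard part is this bookkeeping: one must verify that the combined updates propagate the right powers of $f_k$ and $f_{i^{*}}$ into every $\bm{v}_s$, $s<k-1$, so that each subsequent cleanup step again reduces to a pure prime power annihilator---a reduction that uses unique factorization in $\K[\lambda]$ to isolate the exponent of a single irreducible factor at each step.
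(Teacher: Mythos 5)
Your proposal is correct and follows essentially the same route as the paper's proof: the same case split on $\bm{v}=\bm{0}$, the same loop invariant for the $i$-loop identifying the first break index with the largest $i$ for which $\rho_{i,j}<r_{i,j}$ (the paper's $q''_j$), and the same backward cleanup in which each updated $\bm{v}_{k-1}$ is annihilated by the pure prime power $f_k(\lambda)^{r_{k,j}-\rho_{k,j}}$. The only difference is presentational: you phrase the invariants via minimal annihilating polynomials of the intermediate vectors and divisibility in $\K[\lambda]$, whereas the paper writes out the explicit product form of each intermediate vector.
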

\begin{proof}
  Recall that
  \begin{equation}
    \label{eq:unitannihpol}
    \pi_{A,j}(\lambda)=
      f_1(\lambda)^{r_{1,j}}f_2(\lambda)^{r_{2,j}}\cdots  
      f_q(\lambda)^{r_{q,j}},
  \end{equation}
  and
  \begin{equation}
    \label{eq:pseudounitannihpol}
    \pi'_{A,j}(\lambda)=
    f_1(\lambda)^{\rho_{1,j}}f_2(\lambda)^{\rho_{2,j}}\cdots  
    f_q(\lambda)^{\rho_{q,j}},
  \end{equation}
  a $j$-th unit pseudo annihilating polynomial calculated by
  Algorithm~\ref{alg:upap}.
  We consider the following cases according to lines
  \ref{alg:test-puap} and \ref{alg:test-puap-true1}.
  
  \textbf{Case 1}:
  $\bm{v}=\bm{0}$. In this case, the
  algorithm outputs $\pi'_{A,j}(\lambda)$ as
  $\pi_{A,j}(\lambda)$. 

  \textbf{Case 2}: $\bm{v}\ne\bm{0}$. For
  $j=1,\ldots,n$, $r_{i,j}$ in \cref{eq:unitannihpol} and
  $\rho_{i,j}$ in \cref{eq:pseudounitannihpol}, let
  \begin{equation}
    \label{eq:umap-p-q'j}
    q''_j = \max\{1\le i\le q\mid \rho_{i,j}<r_{i,j}\},
  \end{equation}
  and let $\delta_{i,j}=r_{i,j}-\rho_{i,j}$.

  For every $i$ in the ``for'' loop from line \ref{alg:umap-p-i-loop}
  and $l$ at the beginning of the ``for'' loop in
  line~\ref{alg:umap-p-l-loop}, we have
  \[
  \bm{v}=f_1(A)^{m_1}\cdots f_{i-1}(A)^{m_{i-1}} f_i(A)^{\rho_{i,j}+l-1}
  f_{i+1}(A)^{\rho_{i+1,j}}\cdots f_{q}(A)^{\rho_{q,j}}\bm{e}_j.
  \]
  For making $\bm{v}=\bm{0}$ in line \ref{alg:test-puap-true2},
  exponent of $f_i(A)$ must be greater than or equal to $r_{i,j}$ for
  $i=1,\ldots,q''_j$. In fact, for the first time when the condition
  at the line
  \ref{alg:test-puap-true2} is satisfied, we have $i=q''_j$,
  $l=r_{q''_j,j}-\rho_{q''_j,j}$, and
  \begin{multline}
    \bm{v}=\bm{0}
    \\
    =f_1(A)^{m_1}\cdots f_{q''_j-1}(A)^{m_{q''_j-1}}
    f_{q''_j}(A)^{r_{q''_j,j}}f_{q''_j+1}(A)^{\rho_{q''_j+1,j}}\cdots
    f_{q}(A)^{\rho_{q,j}}\bm{e}_j. \label{eq:puap-update-satisfied}
  \end{multline}
  Then, by line \ref{alg:umap-p-set-rij-for-the-largest}, we have
  $r_{q''_j,j}\gets \rho_{q''_j,j}+(r_{q''_j,j}-\rho_{q''_j,j})=r_{q''_j,j}$.
  
  At the end of ``for'' loop in line \ref{alg:umap-p-i-end} for the
  $i$-th time, we have
  \begin{equation}
    \label{eq:puap-vs}
    \bm{v}_s=f_1(A)^{m_1}\cdots
    f_{s}(A)^{m_{s}}f_{s+1}(A)^{\rho_{s+1,j}}\cdots
    f_{q}(A)^{\rho_{q,j}}\bm{e}_j
  \end{equation}
  for $s=0,\ldots,i$ (note that we do not have factors
  $f_1(A)^{m_1}\cdots f_{s}(A)^{m_s}$ for $s=0$). Thus, when the
  condition at the line
  \ref{alg:test-puap-true2} is satisfied for $i=q''_j$, we have
  \cref{eq:puap-vs} for $s=0,\ldots,q''_j-1$. Then, by ``for'' loop
  between line \ref{alg:umap-p-update-vk-begin} and
  \ref{alg:umap-p-update-vk-end}, $\bm{v}_s$ in \cref{eq:puap-vs} gets
  updated as
  \begin{multline*}
    \bm{v}_s=f_1(A)^{m_1}\cdots
    f_{s}(A)^{m_{s}}f_{s+1}(A)^{\rho_{s+1,j}}\cdots
    f_{q''_j-1}(A)^{\rho_{q''_j-1,j}}
    \\
    \times f_{q''_j}(A)^{r_{q''_j,j}}
    f_{q''_j+1}(A)^{\rho_{q''_j+1,j}}\cdots
    f_{q}(A)^{\rho_{q,j}}\bm{e}_j,
  \end{multline*}
  for $s=0,\ldots,q''_j-1$ (note that exponent of $f_{q''_j}(A)$ is equal to
  $r_{q''_j,j}$ which is equal to the one in the unit annihilating
  polynomial). 

  After exiting from ``for'' loop at line \ref{alg:umap-p-i-end}, we
  have $i=q''_j$, thus, at the first time for ``for'' loop in line
  \ref{alg:umap-p-k}, we have $k=q''_j-1$.
  For $k=q''_j-1,\ldots,1$ in the ``for'' loop from line
  \ref{alg:umap-p-k}, we have
  \begin{multline*}
    \bm{v}=\bm{v}_{k-1}=f_1(A)^{m_1}\cdots
    f_{k-1}(A)^{m_{k-1}}
    f_{k}(A)^{\rho_{k,j}} 
    \\
    \times
    f_{k+1}(A)^{r_{k+1,j}} \cdots
    f_{q''_j}(A)^{r_{q''_j,j}}
    f_{q''_j+1}(A)^{\rho_{q''_j+1,j}} \cdots
    f_{q}(A)^{\rho_{q,j}}\bm{e}_j.
  \end{multline*}
  If $\bm{v}$ satisfies the condition in line
  \ref{alg:umap-p-k-vk-1-v}, then
  \begin{multline}
    \label{eq:umap-k-vm-1-v}
    \bm{v}=\bm{0}=f_1(A)^{m_1}\cdots
    f_{k-1}(A)^{m_{k-1}}f_{k}(A)^{r_{k,j}}
    \\
    \times
    f_{k+1}(A)^{r_{k+1,j}} \cdots
    f_{q''_j}(A)^{r_{q''_j,j}} 
    f_{q''_j+1}(A)^{\rho_{q''_j+1,j}}\cdots
    f_{q}(A)^{\rho_{q,j}}\bm{e}_j,
  \end{multline}
  with $l=r_{k,j}-\rho_{k,j}$. Thus, by ``for'' loop between line
  \ref{alg:umap-p-update-vk-1-begin} and
  \ref{alg:umap-p-update-vk-1-end}, $\bm{v}_s$ in \cref{eq:puap-vs}
  gets updated as
  \begin{multline*}
    \bm{v}_s=f_1(A)^{m_1}\cdots
    f_{s}(A)^{m_{s}}f_{s+1}(A)^{\rho_{s+1,j}}\cdots
    f_{k-1}(A)^{\rho_{k-1,j}}
    \\
    \times
    f_{k}(A)^{r_{k,j}} \cdots
    f_{q''_j}(A)^{r_{q''_j,j}}
    f_{q''_j+1}(A)^{\rho_{q''_j+1,j}}\cdots
    f_{q}(A)^{\rho_{q,j}}\bm{e}_j,
  \end{multline*}
  for $s=0,\ldots,k-1$. Finally, for $k=1$, we have
  \cref{eq:puap-vs} as
  \begin{equation*}
    \bm{v}_0=
    f_{1}(A)^{r_{1,j}}
    f_{2}(A)^{r_{2,j}} \cdots
    f_{q''_j}(A)^{r_{q''_j,j}} 
    f_{q''_j+1}(A)^{\rho_{q''_j+1,j}}\cdots
    f_{q}(A)^{\rho_{q,j}}\bm{e}_j,
  \end{equation*}
  with $l=r_{1,j}-\rho_{1,j}$.  Thus, at exiting line
  \ref{alg:umap-p-k-end}, we have $r_{i,j}$ satisfying
  $\pi_{A,j}(\lambda)=
  f_1(\lambda)^{r_{1,j}}f_2(\lambda)^{r_{2,j}}\cdots
  f_q(\lambda)^{r_{q,j}}$ for each $j$.
\end{proof}

\begin{prop}
  \label{prop:alg:umap-p-time}
  Algorithm~\ref{alg:umap-p} outputs
  the result with
  \begin{multline}
    \label{eq:alg:umap-p-time}
    O\biggl(
    (q-1)n^3
    +n^2\deg(\pi_A(\lambda))
    +n^2\sum_{j=1}^n\deg\pi'_{A,j}(\lambda)
    \\
    +n^2\sum_{j=1}^{n}\sum_{k=1}^{q''_j}
    d_k\{(m_k-\rho_{kj})+k\delta_{k,j}\} \biggr)
  \end{multline}
  arithmetic operations in $\K$, where $q''_j$ is defined as in
  \cref{eq:umap-p-q'j} and
  $\delta_{i,j}=r_{i,j}-\rho_{i,j}$ with $j=1,\ldots,n$, $r_{i,j}$ as
  in \cref{eq:unitannihpol} and $\rho_{i,j}$ as in
  \cref{eq:pseudounitannihpol}.
\end{prop}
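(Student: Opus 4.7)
The plan is to combine \cref{prop:upap-time}, \cref{prop:matrix-vector-horner}, and a careful accounting of the two nested loops in \cref{alg:umap-p} that follow the pseudo annihilating polynomial test. The call to \textsc{Unit\_pseudo\_annih} at line~\ref{alg:umap-p-get-upap} contributes $O((q-1)n^3 + n^2\deg(\pi_A(\lambda)))$ by \cref{prop:upap-time}. For each $j$, the Horner evaluation $\mvh(\pi'_{A,j}(\lambda),A,\bm{e}_j)$ at line~\ref{alg:test-puap} costs $O(n^2\deg(\pi'_{A,j}(\lambda)))$ by \cref{prop:matrix-vector-horner}; summing over $j$ yields the $n^2\sum_j\deg(\pi'_{A,j}(\lambda))$ term. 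When the test at line~\ref{alg:test-puap-true1} succeeds, no further work is done, so the remaining analysis concerns only indices $j$ with $\bm{v}\ne\bm{0}$.

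For such $j$ I would reuse the invariants for $\bm{v}$ and $\bm{v}_k$ established in the proof of \cref{prop:alg:umap-p} to control the outer loop at line~\ref{alg:umap-p-i-loop}. Since for $i<q''_j$ the vector $\bm{v}$ retains the factor $f_{q''_j}(A)^{\rho_{q''_j,j}}$ with $\rho_{q''_j,j}<r_{q''_j,j}$, the inner loop cannot trigger the break on line~\ref{alg:test-puap-true2} and hence completes all $m_i-\rho_{i,j}$ iterations, each of cost $O(n^2 d_i)$ by \cref{prop:matrix-vector-horner}. At $i=q''_j$ it breaks exactly at $l=\delta_{q''_j,j}$, contributing $O(n^2 d_{q''_j}\delta_{q''_j,j})$, after which the update block at lines~\ref{alg:umap-p-update-vk-begin}--\ref{alg:umap-p-update-vk-end} applies $f_{q''_j}(A)^{\delta_{q''_j,j}}$ to each of the $q''_j$ vectors $\bm{v}_0,\ldots,\bm{v}_{q''_j-1}$ via \mvh, at cost $O(n^2 d_{q''_j}\delta_{q''_j,j})$ apiece.

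The second loop at line~\ref{alg:umap-p-k} is handled analogously. For each $k=q''_j-1,\ldots,1$, the same invariants force the inner loop to multiply $\bm{v}$ by $f_k(A)$ exactly $\delta_{k,j}$ times before $\bm{v}=\bm{0}$ is detected, costing $O(n^2 d_k\delta_{k,j})$, while the update block at lines~\ref{alg:umap-p-update-vk-1-begin}--\ref{alg:umap-p-update-vk-1-end} then applies $f_k(A)^{\delta_{k,j}}$ to $k-1$ vectors at cost $O((k-1)\,n^2 d_k\delta_{k,j})$. Combining both loops, the extra work for a single $j$ amounts to
\[
O\Bigl(n^2\sum_{k=1}^{q''_j} d_k(m_k-\rho_{k,j}) + n^2\sum_{k=1}^{q''_j} k\,d_k\,\delta_{k,j}\Bigr),
\]
which reproduces the fourth term of \cref{eq:alg:umap-p-time} after summation over $j$.

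The main obstacle is the bookkeeping of the two update blocks: one must observe that applying \mvh with the polynomial $f_k(\lambda)^{\delta_{k,j}}$, of degree $d_k\delta_{k,j}$, costs $O(n^2 d_k\delta_{k,j})$ per vector, and that exactly $q''_j$ update calls occur at the close of the first loop while iteration $k$ of the second loop contains $k-1$ further updates plus $\delta_{k,j}$ inner multiplications. Regrouping the boundary contribution $q''_j\,d_{q''_j}\delta_{q''_j,j}$ from the first loop together with the per-$k$ cost $k\,d_k\delta_{k,j}$ from the second loop is what produces the compact factor $k\,\delta_{k,j}$ in the stated bound; the remaining estimates are elementary summations.
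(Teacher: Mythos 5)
Your proposal is correct and follows essentially the same route as the paper's proof: the same term-by-term accounting of the \textsc{Unit\_pseudo\_annih} call, the Horner test at line~\ref{alg:test-puap}, the $i$-loop multiplications bounded by $\sum_k d_k(m_k-\rho_{k,j})n^2$, and the regrouping of the boundary update cost $q''_j d_{q''_j}\delta_{q''_j,j}$ with the per-$k$ costs $k\,d_k\delta_{k,j}$ into the single $\sum_{k=1}^{q''_j}k\,d_k\delta_{k,j}$ term. Your explicit appeal to the invariants from Proposition~\ref{prop:alg:umap-p} to pin down the exact iteration counts is a small added justification the paper leaves implicit, but the decomposition and the final bound are identical.
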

\begin{proof}
  First, note that, line
  \ref{alg:umap-p-get-upap} can be executed with
  \begin{equation}
    \label{eq:umap-p:time:upap}
    O\left(
      (q-1)n^3+n^2\deg(\pi_A(\lambda))
    \right)
  \end{equation}
  arithmetic operations as in \cref{eq:upap-time}.  Then, in the
  ``for'' loop in line \ref{alg:umap-p-for-j} for $j=1,\ldots,n$, each
  loop has the following operations on vectors and matrices.

  In line~\ref{alg:test-puap}, calculating $\bm{v}$ takes
  $O(n^2\deg\pi'_{A,j}(\lambda))$ operations. Thus, the number
  of arithmetic operations for this line is 
  \begin{equation}
    \label{eq:alg:test-puap:time}
    \sum_{j=1}^nO(n^2\deg\pi'_{A,j}(\lambda)),
  \end{equation}
  for the ``for'' loop in line \ref{alg:umap-p-for-j}.

  In line \ref{alg:umap-p-get-v}, calculating $\bm{v}$ takes
  $O(d_in^2)$ operations for each $i$ by the Horner's rule,
  as shown in Proposition~\ref{prop:matrix-vector-horner}. Since this
  line is called for $l=1,\ldots,m_i$ (in line
  \ref{alg:umap-p-l-loop}) with $i=1,\ldots,q''_j-1$ (in line
  \ref{alg:umap-p-i-loop}) and $l=1,\ldots,\delta_{i,j}$ with $i=q''_j$,
  the number of arithmetic operations for this line is bounded by
  \begin{equation}
    \label{eq:alg:umap-p-get-v-time}
    \sum_{i=1}^{q''_j}O(d_i(m_i-\rho_{i,j})n^2).
  \end{equation}

  In line \ref{alg:umap-p-update-vk}, calculating $f_i(A)^l\bm{v}_k$
  takes $O(d_{q''_j}\delta_{q''_j,j}n^2)$ operations. Since this line is
  called for $k=i-1,\ldots,0$ (in line
  \ref{alg:umap-p-update-vk-begin}) with $i=q''_j$ (since this line is
  called when $\bm{v}$ satisfies line \ref{alg:test-puap-true2} that
  occurs only once for $i=q''_j$; immediately after that there will be
  a break in line \ref{alg:umap-p-break-i-loop} of the ``for'' loop in
  $i$ (in line \ref{alg:umap-p-i-loop})), the number of arithmetic
  operations for this line is bounded by
  \begin{equation}
    \label{eq:alg:umap-p-update-vk-time}
    O(q''_jd_{q''_j}\delta_{q''_j,j}n^2).
  \end{equation}

  In line \ref{alg:umap-p-update-vs}, calculating $f_k(A)^l\bm{v}_s$
  takes $O(d_k\delta_{k,j}n^2)$ operations. Since this line is
  called for $k-1$ times (in the ``for'' loop in line
  \ref{alg:umap-p-update-vk-1-begin}) in each $k$, the number of
  arithmetic operations for this line is bounded by
  $O(kd_k\delta_{k,j}n^2)$ in that loop. Furthermore, this loop is
  called for $k=i-1,\ldots,1$ (in the ``for'' loop in line
  \ref{alg:umap-p-k}) with $i=q''_j$, the number of arithmetic
  operations for this line is bounded by
  \begin{equation}
    \label{eq:alg:umap-p-update-vs-time}
    \sum_{k=1}^{q''_j-1}O(kd_k\delta_{k,j}n^2).
  \end{equation}

  In line \ref{alg:umap-p-multiply-v-by-fk}, calculating
  $f_k(A)\bm{v}$ takes $O(d_kn^2)$ operations. Since this line is
  called for $\delta_{k,j}$ times (for
  $l=0,\dots,d_{k,j}-\rho_{k,j}-1$ in the ``for'' loop in line
  \ref{alg:umap-p-k-l}) in each $k$ and for $k=i-1,\ldots,1$ (in the
  ``for'' loop in line \ref{alg:umap-p-k}) with $i=q''_j$, the number
  of arithmetic operations for this line is bounded by
  \begin{equation}
    \label{eq:alg:umap-p-multiply-v-by-fk-time}
    \sum_{k=1}^{q''_j-1}O(d_k\delta_{k,j}n^2).
  \end{equation}

  In the above, we see that
  \cref{eq:alg:umap-p-update-vk-time,eq:alg:umap-p-update-vs-time}
  are combined as
  \begin{equation}
    \label{eq:alg:umap-p-update-vs-time-2}
    \sum_{k=1}^{q''_j}O(kd_k\delta_{k,j}n^2),
  \end{equation}
  which dominates \cref{eq:alg:umap-p-multiply-v-by-fk-time}.
  By
  \cref{eq:alg:umap-p-get-v-time,eq:alg:umap-p-update-vs-time-2},
  the number of arithmetic operations in the ``for''
  loop in line \ref{alg:umap-p-i-loop} is
  \begin{equation}
    \label{eq:alg:umap-p-i-loop-time}
    \sum_{k=1}^{q''_j}O(d_k\{(m_k-\rho_{kj})+k\delta_{k,j}\}n^2),
  \end{equation}
  which becomes
  \begin{equation}
    \label{eq:alg:umap-p-for-j-time}
    \sum_{j=1}^{n}\sum_{k=1}^{q''_j}
    O(d_k\{(m_k-\rho_{kj})+k\delta_{k,j}\}n^2),
  \end{equation}
  for $j=1,\ldots,n$ (as in the ``for'' loop in line
  \ref{alg:umap-p-for-j}). Finally, by adding the result in
  \cref{eq:alg:umap-p-for-j-time} together with the ones in
  \cref{eq:umap-p:time:upap,eq:alg:test-puap:time}, we have \cref{eq:alg:umap-p-time}, which
  proves the proposition.

\end{proof}

\begin{rem}
  \label{rem:prop:umap:minpoly}
  If we have the minimal polynomial
  $\pi_A(\lambda)$ together with its irreducible factorization as in
  \cref{eq:minpol}, the computing time is
  \begin{multline*}
    O\Biggl(
    qn^2(\deg(\pi_A(\lambda)))
    +n^2\sum_{j=1}^n\deg\pi'_{A,j}(\lambda)
    \\
    +n^2\sum_{j=1}^{n}\sum_{k=1}^{q''_j}
    d_k\{(l_k-\rho_{kj})+k\delta_{k,j}\} \Biggr),
  \end{multline*}
  (cf.\ \cref{eq:alg:umap-p-time}).
\end{rem}

\begin{rem}
  \label{rem:prop:umap:parallel}
  In Algorithm~\ref{alg:umap-p}, each processes in the ``for'' loop in
  line~\ref{alg:umap-p-for-j} is independent to each other.
  If we distribute each processes to $M$ processors, the
  computing time in \cref{eq:alg:umap-p-time}
  is
  \begin{multline*}
    O
    \Biggl(
    \frac{q-1}{\min\{M,q\}}n^3+\frac{n^2}{\min\{M,q\}}\deg(\pi_A(\lambda))
    +\frac{n^2}{M}\sum_{j=1}^n\deg\pi'_{A,j}(\lambda)
    \\
    +\frac{n^2}{M}\sum_{j=1}^{n}\sum_{k=1}^{q''_j}
    d_k\{(m_k-\rho_{kj})+k\delta_{k,j}\}
    \Biggr),
  \end{multline*}
  where first two terms are from Algorithm~\ref{alg:upap} and they have
  $\min\{M,q\}$ in the denominators.
\end{rem}



\section{Efficient calculation of unit pseudo annihilating polynomials} 
\label{sec:upap}

Now, we propose an efficient method for calculating unit pseudo
annihilating polynomials which is the main result of the present
paper.  For calculating
$\pi'_{A,j}(\lambda)$ for $j=1,2,\ldots,n$ in
Algorithm~\ref{alg:upap}, $q$ row vectors
$\bm{w}_1^{(0)}=\bm{u}G_1,\bm{w}_2^{(0)}=\bm{u}G_2,\ldots,\bm{w}_q^{(0)}=\bm{u}G_q$
of dimension $n$ are needed, which are defined as
\begin{equation}
  \label{eq:w-vectors}
  \begin{split}
    \bm{w}_1^{(0)} &= \bm{u}F_2^{m_2}F_3^{m_3}\cdots F_q^{m_q},\\
    \bm{w}_2^{(0)} &= \bm{u}F_1^{m_2}F_3^{m_3}\cdots F_q^{m_q},\\
    & \vdots \\
    \bm{w}_q^{(0)} &= \bm{u}F_1^{m_2}F_2^{m_2}\cdots F_{q-1}^{m_{q-1}},
  \end{split}
\end{equation}
where $F_i=f_i(A)$ for $i=1,\ldots,q$ and $\bm{u}$ is a random
vector. Since $G_1,\ldots,G_q$ consist of almost the same factors,
$\bm{w}_1^{(0)},\ldots,\bm{w}_q^{(0)}$ are calculated efficiently with
the use of
binary splitting technique, as follows.

\begin{expl}
  \label{expl:binary-tree}
  We show an example for $q=8$. 
  Let $\bm{u}_{(i_1,i_2,\ldots,i_k)}$ denote the vector
  $\bm{u}F_{i_1}^{m_{i_1}}F_{i_2}^{m_{i_2}}\cdots F_{i_k}^{m_{i_k}}$,
  for simplicity. For example,
  $\bm{w}_1^{(0)}=\bm{u}_{(2,3,4,5,6,7,8)}$.
  Consider for instance the four vectors
  $\bm{w}_1^{(0)},\bm{w}_2^{(0)},\bm{w}_3^{(0)},\bm{w}_4^{(0)}$.
  Since $\bm{w}_i^{(0)}=\bm{u}G_i$ and $G_1,G_2,G_3,G_4$ have
  $F_5^{m_5}F_6^{m_6}F_7^{m_7}F_8^{m_8}$ as a common factor, we
  first calculate
  $\bm{u}_{(5,6,7,8)}=\bm{u}F_5^{m_5}F_6^{m_6}F_7^{m_7}F_8^{m_8}$. Second,
  by multiplying $F_3^{m_3}F_4^{m_4}$ and $F_1^{m_1}F_2^{m_2}$, we
  have
  \begin{align}
    \bm{u}_{(3,4,5,6,7,8)} &= \bm{u}_{(5,6,7,8)}(F_3^{m_3}F_4^{m_4}),
    \label{eq:w12}
    \\
    \bm{u}_{(1,2,5,6,7,8)} &= \bm{u}_{(5,6,7,8)}(F_1^{m_1}F_2^{m_2}), 
    \label{eq:w34}
  \end{align}
  respectively. Lastly, by multiplying $\bm{u}_{(3,4,5,6,7,8)}$
  by $F_2^{m_2}$ and $F_1^{m_1}$ in \cref{eq:w12}, we have
  $\bm{u}_{(2,3,4,5,6,7,8)}=\bm{w}_1^{(0)}$ and
  $\bm{u}_{(1,3,4,5,6,7,8)}=\bm{w}_2^{(0)}$, respectively, and by
  multiplying $\bm{u}_{(1,2,5,6,7,8)}$ by $F_4^{m_4}$ and $F_3^{m_3}$
  in \cref{eq:w34}, we obtain
  $\bm{u}_{(1,2,4,5,6,7,8)}=\bm{w}_3^{(0)}$ and
  $\bm{u}_{(1,2,3,5,6,7,8)}=\bm{w}_4^{(0)}$, respectively.  With the
  same manner,
  $\bm{u}_{(1,2,3,4,6,7,8)}=\bm{w}_5^{(0)},\ldots,\bm{u}_{(1,2,3,4,5,6,7)}=\bm{w}_8^{(0)}$
  can be calculated
  as well, shown as a binary tree in
  Figure~\ref{fig:pseudo-annih-pol-binary-tree}.  In this example,
  $\{\bm{w}_1^{(0)},\bm{w}_2^{(0)},\bm{w}_3^{(0)},\bm{w}_4^{(0)}\}$
  and
  $\{\bm{w}_5^{(0)},\bm{w}_6^{(0)},\bm{w}_7^{(0)},\bm{w}_8^{(0)}\}$
  are calculated independently, thus these calculation can be
  parallelized.
  \begin{figure}
    \centering
    \begin{tikzpicture}
      \Tree
      [.$\bm{u}$
        [.$\bm{u}_{(5,6,7,8)}$ 
          [.$\bm{u}_{(3,4,5,6,7,8)}$
            [.$\bm{w}_1^{(0)}$ ]
            [.$\bm{w}_2^{(0)}$ ]
          ]
          [.$\bm{u}_{(1,2,5,6,7,8)}$ 
            [.$\bm{w}_3^{(0)}$ ]
            [.$\bm{w}_4^{(0)}$ ]
          ]
        ]
        [.$\bm{u}_{(1,2,3,4)}$
          [.$\bm{u}_{(1,2,3,4,7,8)}$ 
            [.$\bm{w}_5^{(0)}$ ]
            [.$\bm{w}_6^{(0)}$ ]
          ]
          [.$\bm{u}_{(1,2,3,4,5,6)}$ 
            [.$\bm{w}_7^{(0)}$ ]
            [.$\bm{w}_8^{(0)}$ ]
          ]
        ]
      ]
    \end{tikzpicture}
    \caption{Calculating $\bm{w}_j^{(0)}$s in \cref{eq:w-vectors}
      with a binary tree. See Example~\ref{expl:binary-tree} for
      details.}
    \label{fig:pseudo-annih-pol-binary-tree}
  \end{figure}
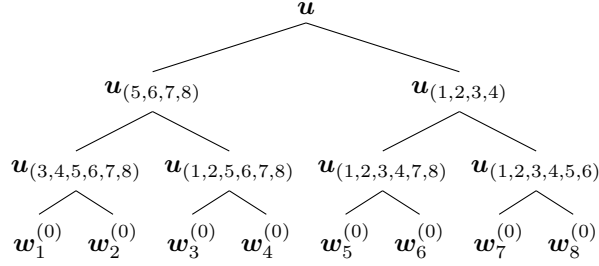
\end{expl}

We define the binary tree used in
Example~\ref{fig:pseudo-annih-pol-binary-tree} as follows.
\begin{defn}[A binary tree used to calculate
  $\bm{w}_1^{(0)},\ldots,\bm{w}_q^{(0)}$ in \cref{eq:w-vectors}]
  \label{defn:binary-tree-2}
  Let $p'$ and $p''$ be integers satisfying $1\le p'\le p''\le q$ and
  $S(p',p'')=\{p',\ldots,p''\}$. For $S=S(1,q)$, 
  define a binary tree $(T_s,V_s)$, where $T_s$ is the set of the
  nodes and $V_s$ is the set of the vertices, as follows:
  \begin{enumerate}
  \item Let $S$ be the root node;
  \item For a node $S(p',p'')$ satisfying $p'<p''$, let it be an
    internal node and append $S(p',p'+\lfloor(p''-p')/2\rfloor)$ and
    $S(p'+\lfloor(p''-p')/2\rfloor+1,p'')$ as its child nodes;
  \item If a node $S(p',p'')$ satisfies $p'=p''$, then let it be a leaf
    node.
  \end{enumerate}
\end{defn}

  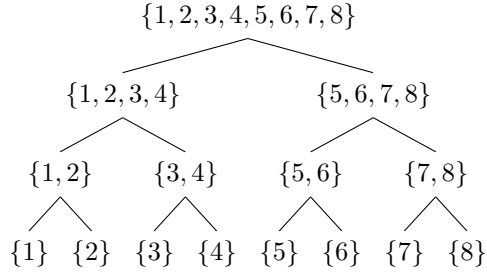
\begin{figure}
    \centering
    \begin{tikzpicture}
      \Tree
      [.$\{1,2,3,4,5,6,7,8\}$
        [.$\{1,2,3,4\}$ 
          [.$\{1,2\}$
            [.$\{1\}$ ]
            [.$\{2\}$ ]
          ]
          [.$\{3,4\}$ 
            [.$\{3\}$ ]
            [.$\{4\}$ ]
          ]
        ]
        [.$\{5,6,7,8\}$
          [.$\{5,6\}$ 
            [.$\{5\}$ ]
            [.$\{6\}$ ]
          ]
          [.$\{7,8\}$ 
            [.$\{7\}$ ]
            [.$\{8\}$ ]
          ]
        ]
      ]
    \end{tikzpicture}
    \caption{The binary tree of Example~\ref{expl:binary-tree}
      (cf.\ Figure~\ref{fig:pseudo-annih-pol-binary-tree}).}
    \label{fig:pseudo-annih-pol-binary-tree-2}
  \end{figure}

\begin{prop}
  \label{prop:binary-splitting}
  Let $S=\{1,\ldots,q\}$ and $I$ be a node in the graph $(T_S,V_S)$ in
  Definition~\ref{defn:binary-tree-2}. For $I$, let
  \begin{equation}
    \label{eq:vi}
    \bm{v}_I=\bm{u}\prod_{j\in S\setminus I}F_j^{m_j},
  \end{equation}
  where $\bm{u}$ and $F_j$ are defined as in \cref{eq:w-vectors}.
  For calculating
  $\bm{w}_1^{(0)},\ldots,\bm{w}_q^{(0)}$, the total number of
  vector-matrix 
  multiplications by matrices $F_p^{m_p}$'s is estimated as
  $O(q\log_2q)$. 
\end{prop}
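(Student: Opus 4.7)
The plan is to reduce the claim to a level-counting argument on the binary tree $(T_S,V_S)$ of Definition~\ref{defn:binary-tree-2}. First I would observe that the leaves of the tree are exactly the singletons $\{1\},\ldots,\{q\}$, and that the vector attached to the leaf $\{j\}$ via \cref{eq:vi} is $\bm{u}\prod_{p\ne j}F_p^{m_p}=\bm{w}_j^{(0)}$. So producing all of $\bm{w}_1^{(0)},\ldots,\bm{w}_q^{(0)}$ amounts to computing $\bm{v}_I$ at every leaf of the tree, starting from $\bm{v}_S=\bm{u}$ at the root.

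Next I would analyze the cost of descending one edge. For an internal node $I=S(p',p'')$ of size $m=p''-p'+1$, Definition~\ref{defn:binary-tree-2} produces two children $I_L$ and $I_R$ that partition $I$, with $|I_L|=\lceil m/2\rceil$ and $|I_R|=\lfloor m/2\rfloor$. Since $S\setminus I_L=(S\setminus I)\cup I_R$, the vector $\bm{v}_{I_L}$ is obtained from $\bm{v}_I$ by $|I_R|$ successive multiplications by matrices of the form $F_p^{m_p}$, and symmetrically $\bm{v}_{I_R}$ costs $|I_L|$ such multiplications. The combined cost at the internal node $I$ is therefore $|I_L|+|I_R|=|I|=m$.

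Now I would sum across levels. The internal nodes at a fixed depth are pairwise disjoint subsets of $S$, so the sum of their sizes, and hence the total cost incurred at that depth, is bounded by $q$. A short induction on the splitting rule shows the depth of the tree is at most $\lceil\log_2 q\rceil$, since the larger child of a size-$m$ node has size $\lceil m/2\rceil$. Multiplying the per-level bound by the depth bound then yields $O(q\log_2 q)$, as claimed.

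The main obstacle I expect is essentially bookkeeping: one must verify the depth bound carefully when $q$ is not a power of two, and be explicit that each multiplication by one of the $F_p^{m_p}$'s is counted as a single vector-matrix multiplication, matching the way the statement treats the $F_p^{m_p}$'s as the atomic units of work.
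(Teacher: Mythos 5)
Your proposal is correct and follows essentially the same route as the paper's proof: identify the leaves with the $\bm{w}_p^{(0)}$'s, observe that a child's vector is obtained from its parent's by multiplying by the $F_j^{m_j}$ for $j$ in the parent minus the child, bound the work per level by $q$, and multiply by the $O(\log_2 q)$ height. Your version is in fact slightly more explicit than the paper's in justifying the per-level bound via disjointness of the nodes at a fixed depth.
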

\begin{proof}
  We show that tracing all the paths of $(T_S,V_S)$ from the root node
  to the leaves enables us to calculate
  $\bm{w}_1^{(0)},\ldots,\bm{w}_q^{(0)}$. First, by
  \cref{eq:vi}, we have $\bm{u}=\bm{u}\prod_{j\in S\setminus
    S}F_j^{m_j}=\bm{v}_S$ and $\bm{w}_p^{(0)}=\bm{u}\prod_{j\in
    S\setminus \{p\}}F_j^{m_j}=\bm{v}_{\{p\}}$ for
  $p=1,\ldots,q$. Now, let $I$ be a parent node and $J$ be its child 
  node. Then we have
  \[
  \bm{v}_J=\bm{v}_I\prod_{j\in I\setminus J}F_j^{m_j},
  \]
  which means the fact that a vector in the child node can be
  calculated by
  using intermediate result in its parent node. For estimating total
  number of multiplications of matrices $F_p^{m_p}$, it requires $q$
  multiplications of $F_p^{m_p}$ in total for calculating vectors in
  all the nodes of depth $k$, and the height of the tree is estimated
  as $O(\log_2 q)$, thus we have the claim.  This completes the
  proof.
\end{proof}



\begin{prop}
  With the binary splitting as in
  Proposition~\ref{prop:binary-splitting}, Algorithm~\ref{alg:upap}
  calculates unit pseudo annihilating polynomials with
  \begin{equation}
    \label{eq:upap:time:binary-splitting}
    O(n^3\max\{1,\log_2 q\})
  \end{equation}
  arithmetic operations in $\K$.
\end{prop}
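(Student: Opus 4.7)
The plan is to replace the ``for $i=1,\ldots,q$'' computation of the starting vectors $\bar{\bm{b}}_i = \bm{u}\,g_i(A) = \bm{w}_i^{(0)}$ in Algorithm~\ref{alg:upap} with a traversal of the binary tree $(T_S,V_S)$ of Definition~\ref{defn:binary-tree-2}, and then re-examine each term of the complexity bound \cref{eq:upap-time}. Only the first term changes: the inner loop of Algorithm~\ref{alg:upap} that repeatedly applies $f_i(A)$ to $\bar{\bm{b}}_i$ is untouched, so its cost remains bounded by \cref{eq:prop:upap:2ndterm}, namely $O(n^2\deg(\pi_A(\lambda))) = O(n^3)$, since $\deg(\pi_A(\lambda))\le n$.

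Next, I would account for the cost of computing all the $\bm{v}_I$ at the nodes of $(T_S,V_S)$ via \vmh. For a parent node $I$ with children $J_1, J_2$ partitioning $I$, producing both $\bm{v}_{J_1}=\bm{v}_I\prod_{j\in J_2}F_j^{m_j}$ and $\bm{v}_{J_2}=\bm{v}_I\prod_{j\in J_1}F_j^{m_j}$ by successive application of \vmh\ to the factors $f_j(\lambda)^{m_j}$ costs, by Proposition~\ref{prop:matrix-vector-horner},
\[
O\Bigl(n^2\sum_{j\in I}d_j m_j\Bigr).
\]
The key observation is that the nodes at any fixed depth $k$ partition $S=\{1,\ldots,q\}$, so summing this bound over all parents at depth $k-1$ telescopes to
\[
O\Bigl(n^2\sum_{j\in S}d_j m_j\Bigr)=O(n^2\cdot n)=O(n^3),
\]
using $\sum_{j=1}^q d_j m_j=\deg(\chi_A)=n$. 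Since by Definition~\ref{defn:binary-tree-2} the tree has height $O(\log_2 q)$, summing over levels gives $O(n^3\log_2 q)$ to obtain every $\bm{w}_p^{(0)}=\bm{v}_{\{p\}}$, replacing the $O((q-1)n^3)$ term of \cref{eq:upap-time}.

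Combining the two contributions yields a total of $O(n^3\log_2 q)+O(n^3)=O(n^3\max\{1,\log_2 q\})$, where the $\max$ absorbs the degenerate case $q=1$ in which the tree is trivial and only the unchanged second term survives. The main subtlety, and the step I would be most careful with, is the per-level telescoping: although each transition's cost depends on the size of the particular node $I$, the partition property at every depth forces the degrees to add up to exactly $n$, which is what makes the per-level cost collapse to $O(n^3)$ regardless of how unbalanced the factor degrees $d_j m_j$ may be. Everything else is routine book-keeping on top of the already-proved Proposition~\ref{prop:upap-time} and Proposition~\ref{prop:binary-splitting}.
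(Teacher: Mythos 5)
Your proof is correct and follows the same route as the paper: replace the naive computation of the starting vectors $\bar{\bm{b}}_i=\bm{u}g_i(A)$ by the tree traversal of Proposition~\ref{prop:binary-splitting}, bound the tree cost by $O(n^3\log_2 q)$, and observe that the inner-loop cost $O(n^2\deg(\pi_A(\lambda)))=O(n^3)$ is unchanged. In fact your per-level telescoping argument (the nodes at each depth are disjoint subsets of $S$, so $\sum_j d_jm_j\le\deg\chi_A=n$ per level) supplies the justification for the $O(n^3\log_2 q)$ bound that the paper's own proof states without elaboration.
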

\begin{proof}
  The number of arithmetic operations in Algorithm~\ref{alg:upap} is
  estimated as follows. First, in line~\ref{alg:upap:ugia}, total
  number of operations for calculating $\bar{\bm{b}}_i$ for
  $i=1,\ldots,q$ is $O(n^3\log_2 q)$.
  Next, in
  line~\ref{alg:upap:vectormatrix}, the number of operations for
  calculating $\bar{\bm{b}}_i$ is bounded above by $O(n^2d_im_i)$ for
  each $i\in\{1,\ldots,q\}$, thus total number of operations for
  $i=1,\ldots,q$ is bounded above by
  $O\left(n^2\sum_{i=1}^q(d_im_i)\right)=O(n^3)$. As a consequence,
  total number of operations becomes as in
  \cref{eq:upap:time:binary-splitting}, which completes the proof.
\end{proof}

\begin{thm}
  \label{thm:binary-splitting}
  With the help of binary splitting technique described in
  Proposition~\ref{prop:binary-splitting}, Algorithm~\ref{alg:umap-p}
  calculates the unit minimal annihilating polynomials with
  \begin{multline}
    O\biggl( n^3\max\{1,\log_2 q\}
      +n^2\deg(\pi_A(\lambda))
      +n^2\sum_{j=1}^n\deg\pi'_{A,j}(\lambda)
      \\
      +n^2\sum_{j=1}^{n}\sum_{k=1}^{q''_j}
      d_k\{(m_k-\rho_{kj})+k\delta_{k,j}\} \biggr)
    \label{eq:alg:umap-p-binary-splitting-time}
  \end{multline}
  arithmetic
  operations in $\K$.
\end{thm}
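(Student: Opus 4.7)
The plan is a direct substitution into the estimate of Proposition~\ref{prop:alg:umap-p-time}. That proposition decomposes the running time of Algorithm~\ref{alg:umap-p} into four contributions: the single call to Algorithm~\ref{alg:upap} at line~\ref{alg:umap-p-get-upap}, costing $O((q-1)n^3+n^2\deg(\pi_A(\lambda)))$ by \cref{eq:upap-time}; the verification step at line~\ref{alg:test-puap}, costing $O(n^2\sum_{j=1}^n\deg\pi'_{A,j}(\lambda))$; and the two correction loops triggered by a failed verification, costing $O(n^2\sum_{j=1}^n\sum_{k=1}^{q''_j}d_k\{(m_k-\rho_{k,j})+k\delta_{k,j}\})$. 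The last three contributions consist exclusively of matrix-vector Horner evaluations of the $f_i(A)$ factors; they do not depend on how the initial row-vectors $\bar{\bm{b}}_1,\ldots,\bar{\bm{b}}_q$ are produced, and are therefore left unchanged by binary splitting.

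Consequently only the first contribution needs to be revised. Here I would invoke the immediately preceding proposition, which already shows that when the vectors $\bm{w}_1^{(0)},\ldots,\bm{w}_q^{(0)}$ of \cref{eq:w-vectors} (equivalently the $\bar{\bm{b}}_i$ computed in line~\ref{alg:upap:ugia}) are obtained by traversing the binary tree of Definition~\ref{defn:binary-tree-2} as in Proposition~\ref{prop:binary-splitting}, the total cost of Algorithm~\ref{alg:upap} drops to $O(n^3\max\{1,\log_2 q\})$, replacing the naive $(q-1)n^3$ term. Substituting this improved bound for the first contribution above, while keeping the $n^2\deg(\pi_A(\lambda))$ summand explicitly in the statement (it is formally dominated by $n^3\max\{1,\log_2 q\}$, but retaining it preserves the structural form of Proposition~\ref{prop:alg:umap-p-time}), yields the claimed estimate \cref{eq:alg:umap-p-binary-splitting-time}.

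The argument is essentially bookkeeping on top of the two preceding results, so I do not anticipate any real obstacle. The only point that deserves a sentence of care is ensuring that the downstream steps of Algorithm~\ref{alg:umap-p} (the verification at line~\ref{alg:test-puap} and the subsequent correction loops) are indifferent to how the exponent matrix $P=(\rho_{i,j})$ was produced: since Algorithm~\ref{alg:upap} returns exactly the same $P$ in either the naive or the binary-splitting implementation, the correctness and complexity analysis of the remaining parts of Algorithm~\ref{alg:umap-p} carries over verbatim from Propositions~\ref{prop:alg:umap-p} and~\ref{prop:alg:umap-p-time}, and the sum of the four bounds gives the theorem.
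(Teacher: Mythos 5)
Your proposal is correct and follows essentially the same route as the paper: the paper's proof likewise just substitutes the binary-splitting bound $O(n^3\max\{1,\log_2 q\})$ from \cref{eq:upap:time:binary-splitting} for the $(q-1)n^3$ term in \cref{eq:umap-p:time:upap}, leaving the remaining contributions from Proposition~\ref{prop:alg:umap-p-time} untouched. Your extra observations (that the downstream steps are indifferent to how $P$ is produced, and that the $n^2\deg(\pi_A(\lambda))$ term is formally dominated) are accurate but not needed beyond what the paper states.
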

\begin{proof}
  In Proposition~\ref{prop:alg:umap-p-time}, now the first term in
  \cref{eq:umap-p:time:upap} is replaced with
  \cref{eq:upap:time:binary-splitting}, thus
  \cref{eq:alg:umap-p-time} is replaced with
  \cref{eq:alg:umap-p-binary-splitting-time},
  which proves the theorem.
\end{proof}

\begin{rem}
  \label{rem:thm:binary-splitting:minpoly}
  If we have the irreducible factorization of the minimal polynomial
  $\pi_A(\lambda)$ as in \cref{eq:minpol},
  \cref{eq:alg:umap-p-binary-splitting-time} becomes
  \begin{multline*}
    O\biggl(n^2(\deg(\pi_A(\lambda)))\max\{1,\log_2 q\}
      +n^2\deg(\pi_A(\lambda))
      +n^2\sum_{j=1}^n\deg\pi'_{A,j}(\lambda)
      \\
      +n^2\sum_{j=1}^{n}\sum_{k=1}^{q''_j}
      d_k\{(l_k-\rho_{kj})+k\delta_{k,j}\} \biggr).
  \end{multline*}
\end{rem}

\begin{rem}
  \label{rem:thm:binary-splitting:parallel}
  As well as in Remark~\ref{rem:prop:umap:parallel}, if we distribute
  each processes to $M$ processors, the computing time in
  \cref{eq:alg:umap-p-binary-splitting-time}
  is
  \begin{multline*}
    O
    \biggl(
    \frac{n^3}{\min\{M,q\}}\max\{1,\log_2q\}
    +\frac{n^2}{\min\{M,q\}}\deg(\pi_A(\lambda))
    +\frac{n^2}{M}\sum_{j=1}^n\deg\pi'_{A,j}(\lambda)
    \\
    +\frac{n^2}{M}\sum_{j=1}^n\sum_{k=1}^{q''_j}
    d_k\{(m_k-\rho_{kj})+k\delta_{k,j}\}
    \biggr).
  \end{multline*}
\end{rem}


\section{Concluding remarks}
\label{sec:concl}

We have described a new method for calculating, for a given matrix
over a field of characteristic zero, all the unit minimal annihilating
polynomials. A key of the proposed method is the use of pseudo
annihilating polynomials. The proposed method can be regarded
therefore a kind of extension or variant of Wiedemann's
method. Note that the algorithm for calculating pseudo annihilating
polynomials is designed by exploiting the irreducible factorization of
the characteristic polynomial.
Accordingly, the possibility that pseudo annihilating
polynomials used in the resulting algorithm are true minimal
annihilating polynomials is expected to be high. This is a significant
feature of the proposed method. Another feature is the
parallelizability: parallel processing can be applied to main blocks
of the resulting algorithm.

Since the set of all minimal annihilating polynomials of unit vectors
has a lot of information on a matrix, the algorithm presented in the
present paper has several applications in exact linear algebra,
especially in eigenproblems. Let us emphasis here the fact that pseudo
annihilating polynomials are very useful to design new efficient
algorithms on exact linear algebra. Some relevant applications will be
discussed in separate papers.

\bibliographystyle{elsarticle-harv}
\bibliography{terui-e}

\end{document}